\newtheorem{thm}{Theorem}
\newtheorem{lem}[thm]{Lemma}
\newtheorem{cor}[thm]{Corollary}
\newtheorem*{rmk}{Remark}
\newcommand{\bs}[1]{\boldsymbol{#1}}
\newcommand{\bbC}{\mathbb{C}}
\newcommand{\bbD}{\mathbb{D}}
\newcommand{\bbN}{\mathbb{N}}
\newcommand{\sfE}{\mathsf{E}}
\renewcommand{\d}{\mathrm{d}}
\renewcommand{\S}{\Sigma}
\renewcommand{\a}{\alpha}
\newcommand{\s}{\sigma}
\renewcommand{\phi}{\varphi}
\newcommand{\Hom}{\mathrm{Hom}}
\newcommand{\ol}[1]{\overline{#1}}
\newcommand{\fin}{\nolinebreak\hspace{\stretch{1}}$\lhd$}
\renewcommand{\t}[1]{\widetilde{#1}}
\renewcommand{\to}{\longrightarrow}
\newcommand{\actson}{\curvearrowright}
\begin{document}

\title[Quantitative equidistribution in quasi-random groups]{Quantitative equidistribution for certain quadruples in quasi-random groups}


\author{Tim Austin}
\address{Courant Institute, New York University, New York, NY 10012, U.S.A.}
\email{tim@cims.nyu.edu}
\thanks{Research supported by a fellowship from the Clay Mathematics Institute}

\date{}

\maketitle

\begin{abstract}
Bergelson and Tao have recently proved that if $G$ is a $D$-quasi-random group, and $x,g$ are drawn uniformly and independently from $G$, then the quadruple $(g,x,gx,xg)$ is roughly equidistributed in the subset of $G^4$ defined by the constraint that the last two coordinates lie in the same conjugacy class. Their proof gives only a qualitative version of this result. The present notes gives a rather more elementary proof which improves this to an explicit polynomial bound in $D^{-1}$.
\end{abstract}

\vspace{7pt}

\noindent\textbf{2010 MSC:} 05D99, 20C15, 37A25

\vspace{7pt}

Let $G$ be a compact group, and let $(X,\S,\mu)$ be $G$ with its Borel $\s$-algebra and Haar probability measure, regarded as an abstract probability space.  The regular representations give two natural $\mu$-preserving $G$-actions on $X$:
\[S^gx := gx \quad \hbox{and} \quad T^gx := xg^{-1}.\]
Observe that $S$ and $T$ commute, and that $S^gT^g$ is the action of $g$ on $X = G$ by conjugation.  Let $\Phi \leq \S$ be the sub-$\s$-algebra of conjugation-invariant (that is, $ST$-invariant) sets.  Also, let $\bbD := \{z \in \bbC:\ |z|\leq 1\}$.  

Following Gowers~\cite{Gow08}, the group $G$ is \emph{$D$-quasi-random} if it has no non-trivial representations of dimension less than $D$.  This note will prove the following.

\begin{thm}\label{thm:BerTao}
If $G$ is $D$-quasi-random, then
\begin{multline}\label{eq:BerTao}
\int_G\Big|\int_X f_1(x)f_2(gx)f_3(xg)\,\mu(\d x)\\ - \Big(\int_X f_1\,\d\mu\Big)\Big(\int_X\sfE(f_2\,|\,\Phi)\sfE(f_3\,|\,\Phi)\,\d\mu\Big)\Big|\,\d g \leq 4D^{-1/8}
\end{multline}
for all measurable functions $f_1,f_2,f_3:X\to \bbD$.
\end{thm}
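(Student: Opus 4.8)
Throughout I would use only one piece of representation theory, Gowers' mixing inequality for quasi-random groups: if $G$ is $D$-quasi-random and $\alpha,\beta:G\to\bbD$ with $\int_G\alpha\,\d\mu=0$, then $\|\alpha*\beta\|_{L^2(\mu)}\le D^{-1/2}\|\alpha\|_2\|\beta\|_2$, where $(\alpha*\beta)(g):=\int_G\alpha(h)\beta(h^{-1}g)\,\d\mu(h)$. This is immediate from Peter--Weyl: a non-trivial $\pi$ has $d_\pi\ge D$, so $\|\hat{\alpha}(\pi)\|_{\mathrm{op}}^2\le d_\pi^{-1}\|\hat{\alpha}(\pi)\|_{\mathrm{HS}}^2\le D^{-1}\|\alpha\|_2^2$, and Plancherel does the rest. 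Two elementary corollaries will be used repeatedly: \textbf{(i)} for $\alpha$ of mean zero the autocorrelation $y\mapsto\int_G\alpha(z)\overline{\alpha(yz)}\,\d\mu(z)$ has $L^2(\mu)$-norm at most $D^{-1/2}\|\alpha\|_2^2$ (same proof, using matrix coefficients of the left-regular representation on the orthocomplement of the constants); and \textbf{(ii)} if in addition $\sfE(\alpha\,|\,\Phi)=0$ --- equivalently every $\hat{\alpha}(\pi)$ is \emph{traceless} --- then the class function $\kappa_\alpha:=\sum_\pi\|\hat{\alpha}(\pi)\|_{\mathrm{HS}}^2\chi_\pi$ has $\|\kappa_\alpha\|_{L^2(\mu)}\le D^{-1/2}\|\alpha\|_2^2$, and, modulo harmless lower-order terms stemming from the non-scalar intertwiners of the various $\pi\otimes\bar\pi$, it is $\kappa_\alpha(ss'^{-1})$ that one obtains on integrating out the conjugating variable in $\int_G\alpha(zsz^{-1})\overline{\alpha(zs'z^{-1})}\,\d\mu(z)$.

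The only geometric input is the identity $xg=x(gx)x^{-1}$: the last two coordinates of the quadruple are always conjugate, so a class function does not see the difference between them. I would decompose $f_3=\sfE(f_3\,|\,\Phi)+g_3$ with $\sfE(g_3\,|\,\Phi)=0$. Since $\sfE(f_3\,|\,\Phi)$ is a class function, $\sfE(f_3\,|\,\Phi)(xg)=\sfE(f_3\,|\,\Phi)(gx)$, and after the substitution $u=gx$ the term $\int_X f_1(x)f_2(gx)\sfE(f_3\,|\,\Phi)(xg)\,\d\mu(x)$ becomes $\int_G f_1(g^{-1}u)\,(f_2\cdot\sfE(f_3\,|\,\Phi))(u)\,\d\mu(u)$, i.e.\ a reflection of $f_1$ convolved with the function $f_2\cdot\sfE(f_3\,|\,\Phi)$ of modulus $\le1$. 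Splitting $f_1=\int_X f_1\,\d\mu+f_1^{0}$: the constant part contributes precisely $\big(\int_X f_1\,\d\mu\big)\big(\int_X f_2\,\sfE(f_3\,|\,\Phi)\,\d\mu\big)$, which equals the quantity subtracted in \eqref{eq:BerTao} because $\sfE(f_3\,|\,\Phi)$ is $\Phi$-measurable; and the $f_1^{0}$ part is a convolution with a mean-zero function, so by the mixing inequality its $L^2(\d g)$-norm --- hence its $L^1(\d g)$-norm --- is at most $D^{-1/2}$.

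It remains to bound $\int_G|\Psi(g)|\,\d g$ for $\Psi(g):=\int_X f_1(x)f_2(gx)g_3(xg)\,\d\mu(x)$ with $\sfE(g_3\,|\,\Phi)=0$. By Cauchy--Schwarz $\int_G|\Psi|\,\d g\le\|\Psi\|_{L^2(G)}$; expanding $\|\Psi\|_2^2$ over two copies of the $X$-variable and substituting $v=gx$, $t=x^{-1}x'$ to integrate out $g$ leaves $\int_G\int_G f_2(v)\overline{f_2(vt)}\,L(v,t)\,\d v\,\d\mu(t)$ with $L(v,t)=\int_X f_1(x)\overline{f_1(xt)}\,g_3(xvx^{-1})\overline{g_3(xtvx^{-1})}\,\d\mu(x)$, where now $f_2$ appears only through $f_2(v)\overline{f_2(vt)}$; a second Cauchy--Schwarz removes $f_2$ (at cost $\|f_2\|_2^2\le1$) and reduces matters to showing $\mathcal{E}:=\int_G\int_G|L(v,t)|^2\,\d v\,\d\mu(t)\le O(D^{-1/2})$, since then $\int_G|\Psi|\,\d g\le\mathcal{E}^{1/4}\le O(D^{-1/8})$. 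Expanding $\mathcal{E}$ over two copies of $x$ and substituting $w=xvx^{-1}$ to integrate out $v$ rewrites it as $\mathcal{E}=\int_G\int_G Q(s,p)\,T(s,p)\,\d s\,\d p$, with $Q(s,p)=\int_X f_1(x)\overline{f_1(sx)}\,\overline{f_1(px)}\,f_1(psx)\,\d\mu(x)$ of modulus $\le1$ and $T(s,p)=\int_G g_3(w)\overline{g_3(sw)}\,\overline{g_3(pwp^{-1})}\,g_3(pswp^{-1})\,\d w$.

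The quasi-randomness must all be spent on $T$. By $|Q|\le1$ and Cauchy--Schwarz it is enough to prove $\|T\|_{L^2(G\times G)}\le O(D^{-1/2})$: integrating out $s$ turns $\|T\|_2^2$ into $\int_G\|\rho_p\|_{L^2(G)}^2\,\d p$, where $\rho_p$ is the autocorrelation of $\eta_p(s):=\overline{g_3(s)}\,g_3(psp^{-1})$; the oscillating part of $\rho_p$ is controlled by corollary (i), and its mean $\zeta(p):=\int_G\overline{g_3(s)}\,g_3(psp^{-1})\,\d s$ has $\int_G\zeta\,\d p=\int_X\overline{g_3}\,\sfE(g_3\,|\,\Phi)\,\d\mu=0$, while expanding $\|\zeta\|_2^2=\int_G|\zeta(p)|^2\,\d p$ and integrating out $p$ via corollary (ii) collapses it to the $L^2$-pairing of $\kappa_{g_3}$ with an autocorrelation of $g_3$, yielding $\|\zeta\|_2^2\le D^{-1}$. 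Carrying this final estimate out rigorously --- in particular using the tracelessness of every $\hat{g_3}(\pi)$ to kill the off-diagonal ($\pi\ne\rho$) contributions to $T$ that would otherwise be of size $O(1)$, and keeping track of the intertwiner corrections mentioned above --- is the step I expect to be the main obstacle; everything else is substitutions, Cauchy--Schwarz, and the two corollaries. Assembling the pieces, the collapsing terms contribute $O(D^{-1/2})$ and $\int_G|\Psi|\,\d g$ contributes $O(D^{-1/8})$, and a careful accounting of the constants gives the bound $4D^{-1/8}$.
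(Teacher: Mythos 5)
Your overall architecture is sound and, at a high level, parallels the paper's: two applications of Cauchy--Schwarz convert the $L^1(\d g)$ quantity into a fourth-moment expression, and all of the quasi-randomness is ultimately spent on correlations of a function along the conjugation action. The genuinely different choices you make are (a) decomposing $f_3 = \sfE(f_3\,|\,\Phi) + g_3$ rather than splitting $f_1$ into its mean and mean-zero parts (the paper does the latter, and handles the conditional expectations of $f_2,f_3$ in one stroke via a single inner-product estimate for the conjugation representation), and (b) working in ``physical space'' with explicit convolutions and the nonabelian Fourier transform, where the paper instead proves an abstract lemma about $P^\circ_{\pi\otimes\pi}(u\otimes v)$ versus $P^\circ_\pi u\otimes P^\circ_\pi v$ and applies it on the product space $X^2$. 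Your substitutions leading to $\mathcal{E}=\int\int Q(s,p)T(s,p)\,\d s\,\d p$ are correct, as is the identification of $T(\cdot,p)$ as the autocorrelation of $\eta_p$, and the constant accounting at the end is plausible.

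The genuine gap is the one you yourself flag: the entire quantitative content of the theorem is concentrated in the claim $\|\zeta\|_{L^2}^2=\int_G\big|\int_G\overline{g_3(s)}g_3(psp^{-1})\,\d s\big|^2\,\d p\leq D^{-1}$ for $g_3$ with $\sfE(g_3\,|\,\Phi)=0$, and you do not prove it. Your route to it goes through ``corollary (ii),'' whose second assertion --- that integrating out the conjugating variable in $\int_G\alpha(zsz^{-1})\overline{\alpha(zs'z^{-1})}\,\d z$ produces $\kappa_\alpha(ss'^{-1})$ ``modulo harmless lower-order terms stemming from the non-scalar intertwiners'' --- is exactly the point at issue, not a harmless correction. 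The operator $\int_G(\pi\otimes\bar\rho)(z)(\,\cdot\,)(\pi\otimes\bar\rho)(z)^{-1}\,\d z$ projects onto the commutant of $\pi\otimes\bar\rho$, whose dimension is the number of irreducible constituents of $\pi\otimes\bar\rho$; neither the off-diagonal ($\pi\not\cong\rho$) terms nor the multiplicity/intertwiner terms are visibly $o(1)$ without doing the Schur-orthogonality computation in earnest. The clean way to close this is the paper's Corollary 2 applied to the conjugation representation $\pi:G\actson L^2(G)$ (whose fixed subspace is exactly the class functions, so $P^\circ_\pi g_3=0$): decomposing $\pi$ into irreducibles and applying Schur orthogonality termwise gives $\int_G|\langle u,\pi^p v\rangle|^2\,\d p=\sum_{\rho_i\cong\rho_j}(\dim V_i)^{-1}\langle u_i,u_j\rangle\langle v_i,v_j\rangle\leq D^{-1}\|u\|^2\|v\|^2$, which is precisely your claim. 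Note also that the weaker, easier bound $\|\zeta\|_2^2\leq D^{-1/2}$ would not suffice for your stated exponent: feeding it through your chain $\mathcal{E}\leq\|T\|_2$, $\int_G|\Psi|\leq\mathcal{E}^{1/4}$ yields only $O(D^{-1/16})$, so the full strength $D^{-1}$ of the Schur-orthogonality estimate is required. Until that step is carried out, the proof is incomplete at its crux.
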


In the recent paper~\cite{BerTao14}, Bergelson and Tao prove that there is some upper bound $c(D)$ for the left-hand side of~(\ref{eq:BerTao}) which tends to $0$ as $D\to \infty$.  (They consider only finite groups $G$, but this is inconsequential.) Their method does not give an effective formula for $c(D)$, but they conjecture that it is polynomial in $D^{-1}$, so Theorem~\ref{thm:BerTao} confirms this.  The proof of Theorem~\ref{thm:BerTao} below also seems much more direct than theirs.  Their approach uses a passage to an ultralimit along a sequence of increasingly quasi-random groups, followed by results from~\cite{BerMcC07} concerning limits along idempotent ultrafilters in infinite groups.  This is why their estimate is ineffective.  The proof below has a few early steps in common with theirs, but then uses only an elementary inequality from representation theory.  In this, it is rather closer to Gowers' original estimates for quasi-random groups in~\cite[Section 4]{Gow08}.  Its format is also similar to Furstenberg's proof in~\cite{Fur77} that weakly mixing transformations are weakly mixing for multiple recurrence.

After writing the first version of this paper, I learned of another short proof of the Bergelson--Tao result due to Anush Tserunyan.  Her initial version again used ultraproducts, but some small modifications gave another effective proof, which improves the bound in Theorem~\ref{thm:BerTao} to $4D^{-1/4}$.  That argument is presented in~\cite[Section 5]{Tse14}.

The reader may consult~\cite{BerTao14} for a discussion of the interpretation of Theorem~\ref{thm:BerTao} in terms of the distribution of the quadruple $(g,x,gx,xg)$ when $x,g$ are drawn uniformly and independently at random from $\mu$.  That paper also derives some combinatorial consequences of Theorem~\ref{thm:BerTao}, discusses possible generalizations to larger values of $d$, and gives some related results that can be obtained by more straightforward combinatorial arguments.

The basis of our argument is the following inequality.  In its formulation, if $\pi:G\actson V$ is a unitary representation, then we let $V^\circ_\pi$ denote its trivial component (that is, the subspace of $\pi$-fixed points), and let $P^\circ_\pi:V\to V^\circ_\pi$ be the orthogonal projection.

\begin{lem}\label{lem:Schur}
If $G$ is $D$-quasi-random, $\pi:G\actson V$ is a unitary representation, $\|\cdot\|_V$ denotes the norm on $V$, and $u,v \in V$, then 
\[\big\|P^\circ_{\pi\otimes \pi}(u\otimes v) - P^\circ_\pi u\otimes P^\circ_\pi v\big\|_{V_{\pi\otimes \pi}} \leq D^{-1/2}\|u\|_V\|v\|_V.\]
\end{lem}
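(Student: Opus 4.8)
The plan is to reduce immediately to the case where $\pi$ has no nonzero invariant vectors, and then to bound the resulting norm by combining an averaging formula for the trivial-component projection with the Schur orthogonality relations. Concretely, I would split $V = V^\circ_\pi \oplus W$ with $W := (V^\circ_\pi)^\perp$ and put $\rho := \pi\uhr W$; since $W$ contains no nonzero $\pi$-fixed vector, every irreducible constituent of $\rho$ is nontrivial, hence of dimension at least $D$ by quasi-randomness. Writing $u = P^\circ_\pi u + u_1$ and $v = P^\circ_\pi v + v_1$ with $u_1 \in W$ and $v_1 \in W$, the four subspaces $V^\circ_\pi \otimes V^\circ_\pi$, $V^\circ_\pi \otimes W$, $W \otimes V^\circ_\pi$ and $W \otimes W$ are $G$-invariant, pairwise orthogonal and span $V \otimes V$, so $P^\circ_{\pi\otimes\pi}$ is block-diagonal for this decomposition. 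On the first block the action is trivial, so $P^\circ_{\pi\otimes\pi}$ fixes $P^\circ_\pi u \otimes P^\circ_\pi v$; on the two mixed blocks the action is the trivial representation tensored with $\rho$ (respectively $\rho$ tensored with the trivial one), which has no nonzero fixed vector because $\rho$ has none, so $P^\circ_{\pi\otimes\pi}$ kills $P^\circ_\pi u \otimes v_1$ and $u_1 \otimes P^\circ_\pi v$; and on the last block it restricts to $P^\circ_{\rho\otimes\rho}$. Thus the left-hand side of the lemma is exactly $\|P^\circ_{\rho\otimes\rho}(u_1\otimes v_1)\|$, and since $\|u_1\| \le \|u\|$ and $\|v_1\| \le \|v\|$, it will suffice to show $\|P^\circ_{\rho\otimes\rho}(a\otimes b)\| \le D^{-1/2}\|a\|\,\|b\|$ for any unitary representation $\rho$ of $G$ all of whose irreducible constituents have dimension at least $D$, and any $a,b$ in its space.

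To estimate this reduced quantity I would write $P^\circ_{\rho\otimes\rho} = \int_G \rho^g \otimes \rho^g \, \d g$, expand $\|P^\circ_{\rho\otimes\rho}(a\otimes b)\|^2$ as a double integral over $G\times G$, and collapse it by invariance of Haar measure to $\int_G \varphi_a(g)\varphi_b(g)\,\d g$, where $\varphi_a(g) := \langle \rho^g a, a\rangle$ is the diagonal matrix coefficient. Cauchy--Schwarz in $L^2(G)$ bounds this by $\|\varphi_a\|_2\,\|\varphi_b\|_2$, so everything comes down to the inequality $\|\varphi_a\|_2^2 \le D^{-1}\|a\|^4$. For that I would decompose the space of $\rho$ into isotypic components $\bigoplus_\sigma W^{(\sigma)}$ (over the nontrivial irreducibles $\sigma$ that occur, $\dim\sigma =: d_\sigma \ge D$), write $a = \sum_\sigma a^{(\sigma)}$, note that $G$-invariance and orthogonality of the $W^{(\sigma)}$ kill the cross terms so that $\varphi_a = \sum_\sigma \varphi_{a^{(\sigma)}}$, and observe that by Peter--Weyl the summands lie in the pairwise orthogonal matrix-coefficient blocks of $L^2(G)$, whence $\|\varphi_a\|_2^2 = \sum_\sigma \|\varphi_{a^{(\sigma)}}\|_2^2$. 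Identifying $W^{(\sigma)} \cong \bbC^{m_\sigma}\otimes V_\sigma$ and writing $a^{(\sigma)} = \sum_l e_l \otimes w_l$ for an orthonormal basis $(e_l)$ of the multiplicity space, one finds $\varphi_{a^{(\sigma)}}(g) = \sum_l \langle \sigma^g w_l, w_l\rangle$, and the Schur orthogonality relations then give $\|\varphi_{a^{(\sigma)}}\|_2^2 = d_\sigma^{-1}\sum_{l,l'} |\langle w_l, w_{l'}\rangle|^2 \le d_\sigma^{-1}\big(\sum_l \|w_l\|^2\big)^2 = d_\sigma^{-1}\|a^{(\sigma)}\|^4 \le D^{-1}\|a^{(\sigma)}\|^4$. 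Summing over $\sigma$ and using $\sum_\sigma \|a^{(\sigma)}\|^4 \le \big(\sum_\sigma \|a^{(\sigma)}\|^2\big)^2 = \|a\|^4$ closes the argument and, with the reduction above, proves the lemma.

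The one step with genuine content is this matrix-coefficient bound $\|\varphi_a\|_2^2 \le D^{-1}\|a\|^4$: it is essential to decompose by isotype rather than by an arbitrary irreducible splitting (the latter would leave uncancelled cross terms among equivalent constituents), to retain the tensor factorization $\bbC^{m_\sigma}\otimes V_\sigma$ of each isotypic block, and to apply Schur orthogonality with the correct normalizing constant $d_\sigma^{-1}$ — which is the only place the hypothesis enters, via $d_\sigma^{-1}\le D^{-1}$. I would also want to note that nothing forces $V$ to be finite-dimensional or the multiplicities $m_\sigma$ to be finite: every series appearing is a convergent sum of non-negative terms dominated by $\|a\|^2$ or $\|a\|^4$, so the interchanges of summation and integration are harmless. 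The remaining ingredients — the block-diagonal structure of $P^\circ_{\pi\otimes\pi}$, the averaging identity, and the single use of Cauchy--Schwarz — are routine.
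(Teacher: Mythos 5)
Your proof is correct, and it reaches the same bound, but by a genuinely different route from the paper's. The paper computes $P^\circ_{\pi\otimes\pi}$ explicitly: it decomposes $\pi$ into irreducibles $\rho_i$, identifies each block $V_i\otimes V_j$ with $\Hom(V_j^\ast,V_i)$, and uses Schur's Lemma to see that the invariants vanish unless $\rho_j\cong\rho_i^\ast$, in which case they are spanned by the normalized identity $d_i^{-1/2}\bs{1}_{d_i}$; this yields the exact formula $\sum d_i^{-1}|\langle u_i,v_j\rangle|^2$ (summed over nontrivial $i\,\mathrm{R}\,j$) for the squared norm, from which the bound is immediate. You never identify the invariant subspace at all: after your (correct) reduction to the trivial-free part, you write the squared norm as $\int_G\varphi_a(g)\varphi_b(g)\,\d g$ via the averaging formula for the projection, and reduce everything to the single estimate $\|\varphi_a\|_{L^2(G)}^2\le D^{-1}\|a\|^4$, proved by isotypic decomposition, Peter--Weyl orthogonality of the matrix-coefficient blocks, and Schur orthogonality. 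That estimate is essentially the diagonal case of the paper's Corollary~\ref{cor:Schur}, whose proof in the paper runs along exactly your lines (including the improved constant $D^{-1}$ noted in the erratum remark); so in effect you derive the Lemma from the Corollary rather than proving the two independently. What your route buys is that it avoids contragredients and the $\Hom(V_j^\ast,V_i)$ bookkeeping entirely, at the cost of one extra Cauchy--Schwarz in $L^2(G)$ between $\varphi_a$ and $\varphi_b$ --- which happens to be lossless here, since both arguments land on $D^{-1/2}\|u\|_V\|v\|_V$. Your closing remarks are also on point: decomposing by isotype (rather than by an arbitrary irreducible splitting) is exactly what makes the cross terms vanish under Peter--Weyl, and the infinite-dimensional issues are indeed harmless for the reasons you give.
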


\begin{proof}
Let $\pi = \bigoplus_{i\geq 0}\rho_i$ be a decomposition of $\pi$ into (not necessarily distinct) irreducibles, let $V_i \leq V$ be the direct summand corresponding to $\rho_i$, let $d_i := \dim V_i$, and let $u = \bigoplus_iu_i$ and $v = \bigoplus_iv_i$ be the corresponding vector decompositions. This decomposition of $\pi$ gives
\[V^\circ_{\pi\otimes \pi} = \bigoplus_{i,j}V^\circ_{\rho_i\otimes \rho_j} \leq \bigoplus_{i,j}V_i\otimes V_j \leq \Big(\bigoplus_iV_i\Big)\otimes \Big(\bigoplus_j V_j\Big).\]
As is standard, for each $i,j$ one may identify $V_i\otimes V_j$ with the space $\mathrm{Hom}(V_j^\ast,V_i)$, which becomes a Hilbert space when endowed with the trace inner product,
\[\langle S,T\rangle_{\mathrm{Hom}(V_j^\ast,V_i)} := \mathrm{tr}\, S^\ast T = \mathrm{tr}\, T^\ast S,\]
and is given the action
\[(\rho_i\otimes \rho_j)^gT := \rho^g_i\circ T\circ (\rho_j^g)^\ast \quad \hbox{for}\ g \in G,\ T \in \mathrm{Hom}(V_j^\ast,V_i).\]

By Schur's Lemma (see, for instance,~\cite[Chapter 2]{Bum04}), one has $V^\circ_{\rho_i\otimes \rho_j} = 0$ unless $\rho_j \cong \rho^\ast_i$.  In that case $\mathrm{Hom}(V_j^\ast,V_i) \cong \mathrm{End}(\bbC^{d_i})$, and under this isomorphism, $V^\circ_{\rho_i\otimes \rho_i^\ast}$ is identified with the one-dimensional subspace generated by the identity matrix $\bs{1}_{d_i} \in \mathrm{End}(\bbC^{d_i})$, which has trace-norm equal to $d_i^{1/2}$.

Let $\mathrm{R}$ be the relation on $\bbN$ defined by
\[i \mathrm{R} j \quad \Longleftrightarrow \quad \rho_j = \rho_i^\ast,\]
and for pairs $i,j$ such that $i\mathrm{R}j$, let $I_{ij}$ be the element of $\Hom(V_j^\ast,V_i)$ that corresponds to $d_i^{-1/2}\bs{1}_{d_i}$. Substituting the above consequence of Schur's Lemma into the expression for $V^\circ_{\pi\otimes \pi}$, we obtain
\[V^\circ_{\pi\otimes \pi} = \bigoplus_{i\mathrm{R}j}\bbC\cdot I_{ij}.\]
This now gives
\[P^\circ_{\pi\otimes \pi}(u\otimes v) = \bigoplus_{i\mathrm{R}j}\langle u_i\otimes v_j,I_{ij}\rangle_{V_i\otimes V_j}\cdot I_{ij} = \bigoplus_{i \mathrm{R} j}d_i^{-1/2}\langle u_i, v_j\rangle_{V_i} \cdot I_{ij}.\]
On the other hand, if $I := \{i\in \bbN:\ \rho_i = \mathrm{triv}\}$, then
\[P^\circ_\pi u\otimes P^\circ_\pi v = \bigoplus_{i,j\in I}u_i\otimes v_j.\]
Subtracting the latter from the former and computing norms, we obtain
\begin{multline*}
\big\|P^\circ_{\pi\otimes \pi}(u\otimes v) - P^\circ_\pi u\otimes P^\circ_\pi v\big\|_{V_{\pi\otimes \pi}}^2
= \sum_{\scriptsize{\begin{array}{c}i,j \in \bbN\setminus I,\\ i \mathrm{R} j\end{array}}}d_i^{-1}|\langle u_i,v_j\rangle_{V_i}|^2\\
\leq D^{-1}\sum_{i \mathrm{R} j}|\langle u_i,v_j\rangle_{V_i}|^2
\leq D^{-1}\Big(\sum_i\|u_i\|_{V_i}^2\Big)\Big(\sum_i \|v_i\|_{V_i}^2\Big)\leq D^{-1}\|u\|_V^2\|v\|_V^2,
\end{multline*}
as required.
\end{proof}

\begin{cor}\label{cor:Schur}
With the same data as above, one has
\[\int_G|\langle u,\pi^g v\rangle_V - \langle P^\circ_\pi u,P^\circ_\pi v\rangle_V|^2\,\d g \leq D^{-1/2}\|u\|_V^2\|v\|_V^2.\]
\end{cor}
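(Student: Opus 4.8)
The plan is to deduce Corollary~\ref{cor:Schur} from Lemma~\ref{lem:Schur} by rewriting the inner product $\langle u, \pi^g v\rangle_V$ as the value of the fixed-vector of a tensor-product representation evaluated against $u\otimes v$, and likewise for the product of projected inner products. The key observation is that for a unitary representation $\pi : G \actson V$, the quantity $\int_G \langle u, \pi^g v\rangle_V \overline{\langle u', \pi^g v'\rangle_V}\,\d g$ is, by orthogonality of matrix coefficients, exactly an inner product of the orthogonal projections of $u\otimes \bar v$ and $u'\otimes \bar v'$ onto the trivial component of $\pi \otimes \bar\pi$. So I would set $W := V \otimes \bar V$ with the representation $\pi \otimes \bar\pi$, identify $\langle u, \pi^g v\rangle_V = \langle u \otimes \bar v,\, (\pi\otimes\bar\pi)^g (w_0)\rangle$ for a suitable fixed $w_0$, and reduce the whole statement to Lemma~\ref{lem:Schur} applied with $\bar\pi$ in place of the second copy of $\pi$ (the lemma as stated uses $\pi\otimes\pi$, but the identical argument — or passing to the contragredient — covers $\pi\otimes\bar\pi$, since the relation $\mathrm{R}$ there is exactly $\rho_j\cong\rho_i^\ast$).

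Concretely, first I would note the elementary identity
\[
\int_G \langle u, \pi^g v\rangle_V\,\d g \;=\; \langle P^\circ_\pi u,\, P^\circ_\pi v\rangle_V,
\]
which follows because $g \mapsto \pi^g v$ averages to $P^\circ_\pi v$ and $P^\circ_\pi$ is self-adjoint. More to the point, I would expand the square in the corollary: writing $a(g) := \langle u,\pi^g v\rangle_V$ and $b := \langle P^\circ_\pi u, P^\circ_\pi v\rangle_V$, we have
\[
\int_G |a(g) - b|^2\,\d g \;=\; \int_G |a(g)|^2\,\d g \;-\; |b|^2,
\]
using that $\int_G a(g)\,\d g = b$ and $b$ is constant in $g$. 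Then $\int_G |a(g)|^2\,\d g = \int_G \langle u\otimes\bar u,\ (\pi\otimes\bar\pi)^g(v\otimes\bar v)\rangle_{V\otimes\bar V}\,\d g = \langle P^\circ_{\pi\otimes\bar\pi}(u\otimes\bar u),\ v\otimes\bar v\rangle = \|P^\circ_{\pi\otimes\bar\pi}(u\otimes\bar u)\|^2$ after one more averaging, while $|b|^2 = \|P^\circ_\pi u\otimes P^\circ_\pi \bar u\|^2$-type expression equals $\langle P^\circ_\pi u \otimes P^\circ_\pi\bar u,\ v\otimes\bar v\rangle$. Hence the difference is $\langle P^\circ_{\pi\otimes\bar\pi}(u\otimes\bar u) - P^\circ_\pi u\otimes P^\circ_{\pi}\bar u,\ v\otimes\bar v\rangle$, and Cauchy--Schwarz together with Lemma~\ref{lem:Schur} bounds this by $D^{-1/2}\|u\|_V^2\|v\|_V^2$.

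The step I expect to require the most care is bookkeeping the complex conjugates and making sure the "trivial component of $\pi\otimes\bar\pi$" manipulations are stated for the right representation: Lemma~\ref{lem:Schur} is written for $\pi\otimes\pi$, but the matrix-coefficient orthogonality relation naturally produces $\pi\otimes\bar\pi$, so I would either (i) observe that Lemma~\ref{lem:Schur}'s proof goes through verbatim with $\bar\pi$ on the second factor since all that is used is Schur's lemma pairing $\rho_i$ with its dual, or (ii) replace $\pi$ by $\pi\oplus\bar\pi$ at the outset so that both $u\otimes v$ and $u\otimes\bar v$ live in $(\pi\oplus\bar\pi)^{\otimes 2}$. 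Option (i) is cleaner. The remaining subtlety is the identity $\int_G a(g)\,\overline{a(g)}\,\d g = \langle P^\circ_{\pi\otimes\bar\pi}(u\otimes\bar u), v\otimes\bar v\rangle$: this is the orthogonality-of-characters computation, $\int_G \pi^g_{kl}\overline{\pi^g_{mn}}\,\d g$, reorganized invariantly, and it should be checked once carefully, after which the rest is Cauchy--Schwarz.
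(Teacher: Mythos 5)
Your proof is correct, but it takes a genuinely different route from the paper's. The paper proves Corollary~\ref{cor:Schur} directly: after replacing $u,v$ by $u-P^\circ_\pi u$ and $v-P^\circ_\pi v$, it decomposes $\pi$ into irreducibles, evaluates $\int_G|\langle u,\pi^g v\rangle|^2\,\d g$ term by term via Schur orthogonality of matrix coefficients, and bounds the resulting sum using $\dim V_i\geq D$; that computation in fact yields the sharper bound $D^{-1}\|u\|_V^2\|v\|_V^2$, as the paper's remark points out. You instead deduce the corollary from Lemma~\ref{lem:Schur}: with $a(g)=\langle u,\pi^g v\rangle_V$ and $b=\langle P^\circ_\pi u,P^\circ_\pi v\rangle_V$, the identity $\int_G a\,\d g=b$ reduces the left-hand side to $\int_G|a|^2\,\d g-|b|^2=\big\langle P^\circ_{\pi\otimes\ol{\pi}}(u\otimes\ol{u})-P^\circ_\pi u\otimes P^\circ_{\ol{\pi}}\ol{u},\ v\otimes\ol{v}\big\rangle$, and Cauchy--Schwarz plus the lemma finish. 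Your two flagged worries are handled correctly: the lemma does cover $\pi\otimes\ol{\pi}$ (most cleanly via your option (ii), applying it verbatim to $\pi\oplus\ol{\pi}$ with the vectors $u\oplus 0$ and $0\oplus\ol{u}$, whose norms are both $\|u\|_V$), and the conjugates come out right, giving $D^{-1/2}\|u\|_V^2\cdot\|v\otimes\ol{v}\|=D^{-1/2}\|u\|_V^2\|v\|_V^2$. What each approach buys: yours reuses Lemma~\ref{lem:Schur} and avoids a second orthogonality computation, at the cost of an extra Cauchy--Schwarz that leaves you with $D^{-1/2}$ rather than $D^{-1}$ --- which is still exactly what the corollary asserts; the paper's direct computation is self-contained and sharper. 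Two small blemishes, neither fatal: your parenthetical claim that $\langle P^\circ_{\pi\otimes\ol{\pi}}(u\otimes\ol{u}),v\otimes\ol{v}\rangle=\|P^\circ_{\pi\otimes\ol{\pi}}(u\otimes\ol{u})\|^2$ is false unless $u=v$, but you never use it; and the careful bookkeeping of $\ol{\pi}$ versus $\pi$ that you insist on is genuinely the delicate point --- the paper's remark records that the originally published derivation of this corollary was erroneous --- so it is worth writing out the $\pi\oplus\ol{\pi}$ reduction explicitly rather than leaving it as an ``either/or''.
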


\begin{rmk}
In the published version of this paper, the proof given for Corollary~\ref{cor:Schur} was incorrect.  A correct proof has been published as an erratum in the same journal.  This preprint has been re-written with the correct proof.  It actually gives an improved bound in which $D^{-1/2}$ becomes $D^{-1}$, but I have not changed the rest of the preprint to account for this. \fin
\end{rmk}

\begin{proof}
Replacing $u$ with $u - P_\pi^\circ u$ and $v$ with $v - P^\circ_\pi v$, we may assume that $\pi$ has no trivial component.  Let $\pi = \bigoplus_{i\geq 1}\rho_i$, $u = \bigoplus_{i\geq 1}u_i$ and $v = \bigoplus_{i\geq 1}v_i$ be a decomposition into irreducible components as in the previous proof.  Substituting into the desired integral gives
\[\int_G |\langle u,\pi^g v\rangle_V|^2\,\d g = \int_G \langle u,\pi^g v\rangle\ol{\langle u,\pi^g v\rangle}\,\d g = \sum_{i,j \geq 1}\int_G \langle u_i,\rho_i^g v_i\rangle_{V_i}\ol{\langle u_j,\rho_j^g v_j\rangle_{V_j}}\,\d g.\]
Another appeal to Schur Orthogonality (\cite[Theorems 2.3 and 2.4]{Bum04}) evaluates each term in this sum.  The result is
\[\sum_{\scriptsize{\begin{array}{c}i,j \geq 1\\ \rho_i \cong \rho_j\end{array}}}\frac{1}{\dim(V_i)}\langle u_i,u_j\rangle_{V_i}\langle v_i,v_j\rangle_{V_i}.\]
Here we have used that when $\rho_i \cong \rho_j$, the isomorphism is unique, and so we may canonically identify $u_j$ and $v_j$ with elements of $V_i$ to compute the inner products.

Finally, since $G$ is $D$-quasi-random and $\pi$ contains no trivial subrepresentation, the above sum is bounded by
\begin{multline*}
D^{-1}\sum_{\scriptsize{\begin{array}{c}i,j \geq 1\\ \rho_i \cong \rho_j\end{array}}}\|u_i\|_{V_i}\cdot \|u_j\|_{V_j}\cdot \|v_i\|_{V_i}\cdot \|v_j\|_{V_j} \leq D^{-1}\Big(\sum_{i\geq 1}\|u_i\|_{V_i}\|v_i\|_{V_i}\Big)^2\\
\leq D^{-1}\Big(\sum_{i\geq 1}\|u_i\|_{V_i}^2\Big)\Big(\sum_{i\geq 1}\|v_i\|_{V_i}^2\Big) = D^{-1}\|u\|_V^2\|v\|_V^2,
\end{multline*}
where the second estimate is by the Cauchy--Bunyakowski--Schwartz Inequality.
\end{proof}

\begin{proof}[Proof of Theorem~\ref{thm:BerTao}]\emph{Step 1: Initial re-arrangement.}\quad 
We start in the same way as~\cite{BerTao14}.  If $f_1$ is constant, say equal to $\a$, then $|\a| \leq 1$, and an easy calculation gives
\begin{eqnarray*}
&&\int_G\Big|\int_X f_1(x)f_2(gx)f_3(xg)\,\mu(\d x) - \Big(\int_X f_1\,\d\mu\Big)\Big(\int_X\sfE(f_2\,|\,\Phi)\sfE(f_3\,|\,\Phi)\,\d\mu\Big)\Big|\,\d g\\
&&= |\a|\int_G\Big|\int_X f_2(gx)f_3(xg)\,\mu(\d x) - \int_X\sfE(f_2\,|\,\Phi)\sfE(f_3\,|\,\Phi)\,\d\mu\Big|\,\d g\\
&& \leq \Big(\int_G\Big|\int_X f_2(gxg^{-1})f_3(x)\,\mu(\d x) - \int_X\sfE(f_2\,|\,\Phi)\sfE(f_3\,|\,\Phi)\,\d\mu\Big|^2\,\d g\Big)^{-1/2},
\end{eqnarray*}
where the last estimate uses the Cauchy--Bunyakowski--Schwartz Inequality.  Let $V:= L^2(G)$, $\pi:G\actson V$ be the action of composition by conjugation on $G$, and let $u := f_3$ and $v := \ol{f_2}$.  Then these both have $\|\cdot\|_2$-norm at most $1$, and so Corollary~\ref{cor:Schur} bounds the last line above by $D^{-1/4} \leq D^{-1/8}$.

By multi-linearity and a change of variables in the inner integral, it therefore suffices to show that
\begin{eqnarray*}
\int_G\Big|\int_X f_1(x)f_2(gx)f_3(xg)\,\mu(\d x)\Big|\,\d g
&=& \int_G\Big|\int_X f_3(y)f_1(yg^{-1})f_2(gyg^{-1})\,\mu(\d y)\Big|\,\d g\\ &=& \int_G\Big|\int_X f_3\cdot f_1T^g\cdot f_2S^gT^g\,\d\mu\Big|\,\d g \leq 3D^{-1/8},
\end{eqnarray*}
whenever $f_2,f_3:X\to \bbD$, $f_1:X\to 2\bbD$ with $\|f_1\|_2 \leq 1$ and $\int f_1\,\d\mu = 0$.

\vspace{7pt}

\emph{Step 2: Removing the absolute values.} \quad By the Cauchy--Bunyakowski--Schwartz Inequality, the previous inequality will follow if one shows that
\begin{multline*}
\int_G\Big|\int_X f_3\cdot f_1T^g\cdot f_2S^gT^g\,\d\mu\Big|^2\,\d g = \int_G\int_{X^2} F_3\cdot F_1\t{T}^g\cdot F_2\t{S}^g\t{T}^g\,\d\mu^{\otimes 2}\,\d g\\
= \int_{X^2} F_3\cdot \Big(\int_G F_1\t{T}^g\cdot F_2\t{S}^g\t{T}^g\,\d g\Big)\,\d\mu^{\otimes 2} \leq 5 D^{-1/4},
\end{multline*}
where $F_i:= f_i\otimes \ol{f_i}:X^2\to \bbC$ for $i=1,2,3$, and $\t{S} := S\times S$, $\t{T} := T\times T$.  The key here is that we have removed the absolute values inside the outer integral, which makes it possible to change the order of the integrals in the last step above.

\vspace{7pt}

\emph{Step 3: Expansion and another change of variables.}\quad Since $\|F_3\|_2 \leq 1$, another appeal to the Cauchy--Bunyakowski--Schwartz Inequality shows that the above would follow from
\begin{eqnarray*}
&&\int_{X^2} \Big|\int_G F_1\t{T}^g\cdot F_2\t{S}^g\t{T}^g\,\d g\Big|^2\,\d\mu^{\otimes 2}\\
&&= \int_{X^2} \int_G \int_G F_1\t{T}^g\cdot \ol{F_1}\t{T}^{hg}\cdot F_2\t{S}^g\t{T}^g\cdot \ol{F_2}\t{S}^{hg}\t{T}^{hg}\,\d g\,\d h\,\d\mu^{\otimes 2}\\
&&= \int_G \int_G \int_{X^2} F_1\t{T}^g\cdot \ol{F_1}\t{T}^{hg}\cdot F_2\t{S}^g\t{T}^g\cdot \ol{F_2}\t{S}^{hg}\t{T}^{hg}\,\d\mu^{\otimes 2}\,\d g\,\d h \leq 25D^{-1/2}.
\end{eqnarray*}

For the proof of this last inequality, we may change variables in the inner integral by $\t{T}^{g^{-1}}$, and so prove instead that
\begin{multline}\label{eq:BerTaointermed}
\int_G \int_G \int_{X^2} F_1\cdot \ol{F_1}\t{T}^h\cdot (F_2\cdot \ol{F_2}\t{S}^h\t{T}^h)\t{S}^g\,\d\mu^{\otimes 2}\,\d g\,\d h\\
= \int_G \int_{X^2} F_1\cdot \ol{F_1}\t{T}^h\cdot \sfE(F_2\cdot \ol{F_2}\t{S}^h\t{T}^h\,|\,\Delta)\,\d\mu^{\otimes 2}\,\d h \leq 25D^{-1/2},
\end{multline}
where $\Delta \leq \S^{\otimes  2}$ is the $\s$-algebra of $\t{S}$-invariant sets.

\vspace{7pt}

\emph{Step 4: Appeal to quasi-randomness.}\quad Now recall that
\[F_2\cdot \ol{F_2}\t{S}^h\t{T}^h = (f_2\cdot \ol{f_2}S^hT^h)\otimes (\ol{f_2}\cdot f_2S^hT^h),\]
a tensor product of two functions $X\to\bbD$.  We may therefore apply Lemma~\ref{lem:Schur} to obtain
\[\Big\|\sfE(F_2\cdot \ol{F_2}\t{S}^h\t{T}^h\,|\,\Delta) - \Big|\int_X f_2\cdot \ol{f_2}S^hT^h\,\d\mu\Big|^2\Big\|_{L^2(\mu^{\otimes 2})} \leq D^{-1/2} \quad \forall h \in G.\]

Substituting this into~(\ref{eq:BerTaointermed}), and using that
\[\|F_1\cdot \ol{F_1}\t{T}^h\|_{L^2(\mu^{\otimes 2})} = \|f_1\cdot \ol{f_1}T^h\|_{L^2(\mu)}^2 \leq (2\cdot 2)^2 = 16,\]
we see that~(\ref{eq:BerTaointermed}) will be proved (with room to spare) if one shows that
\begin{multline*}
\int_G \Big(\int_{X^2} F_1\cdot \ol{F_1}\t{T}^h\,\d\mu^{\otimes 2}\Big)\cdot \Big|\int_X f_2\cdot \ol{f_2}S^hT^h\,\d\mu\Big|^2\,\d h\\
= \int_G \Big|\int_X f_1\cdot \ol{f_1}T^h\,\d\mu\Big|^2\cdot \Big|\int_X f_2\cdot \ol{f_2}S^hT^h\,\d\mu\Big|^2\,\d h \leq D^{-1/2}.
\end{multline*}

Finally, this last inequality holds because
\[\int_G \Big|\int_X f_1\cdot \ol{f_1}T^h\,\d\mu\Big|^2\cdot \Big|\int_X f_2\cdot \ol{f_2}S^hT^h\,\d\mu\Big|^2\,\d h \leq \|f_2\|_\infty^4\int_G \Big|\int_X f_1\cdot \ol{f_1}T^h\,\d\mu\Big|^2\,\d h,\]
and by Corollary~\ref{cor:Schur} this is bounded by
\[D^{-1/2}\|f_1\|_2^4\|f_2\|_\infty^4 \leq  D^{-1/2},\]
since $\int f_1\,\d\mu = 0$.
\end{proof}

\bibliographystyle{abbrv}
\bibliography{bibfile}

\end{document}